\theoremstyle{plain}
\newtheorem{thm}{Theorem}[section]
\newtheorem{pro}[thm]{Proposition}
\theoremstyle{definition}
\newtheorem{ex}[thm]{Example}
\newtheorem{rem}[thm]{Remark}
\numberwithin{equation}{section}
\begin{document}

\title[Systems of perturbed Hammerstein integral equations]{Positive solutions of systems of perturbed Hammerstein integral equations with arbitrary order dependence}  

\date{}

\author[G. Infante]{Gennaro Infante}
\address{Gennaro Infante, Dipartimento di Matematica e Informatica, Universit\`{a} della
Calabria, 87036 Arcavacata di Rende, Cosenza, Italy}%
\email{gennaro.infante@unical.it}%

\begin{abstract}
Motivated by the study of systems of higher order boundary value problems with functional boundary conditions, we discuss, by topological methods, the solvability of a fairly general class of systems of perturbed Hammerstein integral equations, where the nonlinearities and the functionals involved depend on some derivatives. We improve and complement earlier results in the literature. We also provide some examples in order to illustrate the applicability of the theoretical results.
\end{abstract}

\subjclass[2010]{Primary 45G15, secondary 34B10, 34B18, 47H30}

\keywords{Fixed point index, cone, system, positive solution, functional boundary conditions}

\maketitle

\section{Introduction}
In this paper we discuss the solvability of systems of perturbed Hammerstein integral equations of the form
\begin{multline}\label{perhamm-sys-intro}
u_{i}(t)=\lambda_{i} \int_0^1 k_{i}(t,s)f_{i}(s, u_{1}(s),\ldots, u_{i}^{(m_1)}(s), \ldots,  u_{n}(s), \ldots, u_{n}^{(m_n)}(s))\,ds\\
+ \sum_{j=1}^{p_i}\eta_{ij}{\gamma_{ij}}(t) h_{ij}[u], \ t\in [0,1],\ i=1,2,\ldots,n,\phantom{space}
\end{multline}
where $u=(u_{1},\ldots,  u_{n})$, the kernels $k_i$ are sufficiently regular, $f_i$ are continuous, $\gamma_{ij}$ are sufficiently smooth, $h_{ij}$ are compact functionals that are allowed to take into account higher order derivatives and   
$\lambda_{i}, \eta_{ij}$ are parameters. 

One motivation for studying the kind of equations that occur in~\eqref{perhamm-sys-intro} is that these often occur in applications, we refer the reader to the Introduction of~\cite{genupa} and the references therein. The case $n=1$ has been studied recently by Goodrich~\cite{Goodrich9, Goodrich10}, who complemented the earlier works~\cite{genupa, gi-caa}. In particular, Goodrich studied the equation
$$
u_{1}(t)=\lambda_{1} \int_0^1 k_{1}(t,s)f_{1}(s, u_{1}(s))\,ds
+ \sum_{j=1}^{2}{\gamma_{1j}}(t) h_{1j}[u_1],
$$
where the functionals $h_{1j}$ have the specific form
\begin{equation}\label{sform}
h_{1j}[u_1]=h_{j}(\alpha_{j}[u_1]).
\end{equation}
In~\eqref{sform} the functions $h_{j}$ are continuous and
$\alpha_{j}$ are \emph{linear} functionals on the space $C[0,1]$ which can be represented as Stieltjes integrals, namely
\begin{equation}\label{Stieltjes}
\alpha_{j}[u]:=\int_{0}^{1}u_1(s)\,dA_j(s). 
\end{equation} The functional formulation~\eqref{Stieltjes} is well suited for handling, in a unified way, multi-point and integral BCs. For an introduction to nonlocal BCs we refer the reader to the reviews~\cite{Cabada1, Conti, rma, Picone, sotiris, Stik, Whyburn} and the manuscripts~\cite{kttmna, ktejde, jw-gi-jlms}.

The case $n=2$ has been investigated in~\cite{genupa}, where the authors studied the system
\begin{equation*}
u_{i}(t)=\int_0^1 k_{i}(t,s)f_{i}(s, u_{1}(s),  u_{2}(s))\,ds
+ \sum_{j=1}^{2}{\gamma_{ij}}(t) h_{ij}[(u_1,u_2)],\ i=1,2,
\end{equation*}
where the functionals $h_{ij}$ act on the space $C[0,1]\times C[0,1]$. 

We stress that functionals involving higher order derivatives play an important role in applications. In order to illustrate this fact in a simple situation, consider the BVP
\begin{equation}\label{beam}
u_{1}^{(4)}(t)=f_{1}(t,u_{1}(t)),\quad u_{1}(0)=h_{11}[u_{1}],\ u_{1}^{\prime \prime }(0)= u_{1}(1)= u_{1}^{\prime \prime}(1)=0. 
\end{equation}
When $h_{11}[u_{1}] \equiv 0$ the BVP~\eqref{beam} can be used to describe the steady-state case of a simply supported beam of length 1. When the functional $h_{11}$ is non-trivial the BVP~\eqref{beam} can be used to model a beam with a feedback control; for example the case
\begin{equation}\label{shear}
h_{11}[u_{1}] =h(u_1'''(\xi)),
\end{equation}
models a beam with the right end simply supported and where the displacement in the left end is controlled (possibly in a nonlinear manner) by a sensor that measures the shear force in a point $\xi$ placed along of the beam. The perturbed integral equation associated to~\eqref{beam}-\eqref{shear} is
$$
u_{1}(t)= \int_0^1 k_{1}(t,s)f_{1}(s, u_{1}(s))\,ds
+ (1-t) h(u_{1}'''(\xi)),
$$
a case that cannot be handled with the theory developed in~\cite{genupa, Goodrich9, Goodrich10, gi-caa} due to the third order term occurring in~\eqref{shear}. 

The case of higher order dependence within the equation has been in investigated recently, by means of the classical Krasnosel'ski\u\i{}'s theorem of cone compression-expansion, by de Sousa and Minh\'os~\cite{rs-fm-narwa-19}. In particular, the authors of~\cite{rs-fm-narwa-19} consider the existence of nontrivial solutions for the system of Hammerstein equations 
\begin{equation*}
u_{i}(t)= \int_0^1 k_{i}(t,s)f_{i}(s, u_{1}(s),\ldots, u_{i}^{(m_1)}(s), \ldots,  u_{n}(s), \ldots, u_{n}^{(m_n)}(s))\,ds,\ i=1,2,\ldots,n.
\end{equation*}
As an interesting application of their theory, de Sousa and Minh\'os apply their result to a system of BVPs of the form
\begin{equation} \label{sys-intro-sm}
\left\{
\begin{array}{c}
u_{1}^{\prime \prime }(t)+ f_{1}(t,u_{1}(t), u_{1}'(t),u_{2}(t), u_{2}'(t), u_{2}''(t), u_{2}'''(t))=0,\quad t\in (0,1), \\
u_{2}^{(4)}(t)= f_{2}(t,u_{1}(t), u_{1}'(t),u_{2}(t), u_{2}'(t), u_{2}''(t), u_{2}'''(t)),\quad  t\in (0,1), \\
u_{1}(0)= u_{1}(1)=
u_{2}(0)=  u_{2}(1)= u_{2}^{\prime \prime }(0)=  u_{2}^{\prime \prime}(1)=0. %
\end{array}%
\right. 
\end{equation}
The system~\eqref{sys-intro} can be used as a model of the displacement of simply supported suspension bridge. In this model the fourth order equation describes the road bed and the second order equation models the suspending cables, we refer to~\cite{rs-fm-narwa-19} for more details. 

On the other hand, the case of equations of the form
$$
u_{1}(t)=\lambda_{1} \int_0^1 k_{1}(t,s)f_{1}(s, u_{1}(s), u_{1}'(s))\,ds
+ \sum_{j=1}^{2}\eta_{1j}{\gamma_{1j}}(t) h_{1j}[u_1],
$$
where the functionals $h_{1j}$ act on the space $C^{1}[0,1]$, has been studied recently by the author~\cite{gi-der}, by means of the classical fixed point index. Here we develop further this approach and we extend the results of~\cite{gi-der} to the case of systems and higher order dependence in the nonlinearities and the functionals. We also improve the case  $n=1$ and $m_1=1$, by allowing more freedom in the growth of the nonlinearities near the origin, this is achieved by means of an eigenvalue comparison.

In order to illustrate the applicability of our theory, we discuss, \emph{merely as an example}, the solvability of the system of the following model problem
\begin{equation} \label{sys-intro}
\left\{
\begin{array}{c}
u_{1}^{\prime \prime }(t)+\lambda_1 f_{1}(t,u_{1}(t), u_{1}'(t),u_{2}(t), u_{2}'(t), u_{2}''(t), u_{2}'''(t))=0,\quad t\in (0,1), \\
u_{2}^{(4)}(t)=\lambda_2 f_{2}(t,u_{1}(t), u_{1}'(t),u_{2}(t), u_{2}'(t), u_{2}''(t), u_{2}'''(t)),\quad  t\in (0,1), \\
u_{1}(0)=0,\ u_{1}(1)=\eta_{11}h_{11}[(u_{1},u_{2})], \\
u_{2}(0)=\eta_{21}h_{21}[(u_{1},u_{2})],\ u_{2}^{\prime \prime }(0)= u_{2}(1)= u_{2}^{\prime \prime}(1)=0, %
\end{array}%
\right. 
\end{equation}
where $h_{11}, h_{21}$ are nonnegative, compact functionals defined on the space $C^{1}[0,1]\times C^{3}[0,1]$. The interest in~\eqref{sys-intro} arises in the fact that it presents a coupling in the nonlinearities $f_1$ and $f_2$ and in the boundary conditions and allows the presence derivatives of different order in the various components. The system~\eqref{sys-intro} can be seen as a perturbation of the system~\eqref{sys-intro-sm} and is a generalisation of some earlier ones studied in~\cite{gifmpp-cnsns, gipp-mmas}. Here we discuss in detail the existence and non-existence of positive solutions of the system~\eqref{sys-intro}, illustrating how the constants that occur in our theory can be computed or estimated. Our results are new and complement the ones in~\cite{genupa, hen-luca, gi-caa, gi-der, kejde, rs-fm-narwa-19, ya1}.

\section{Main results}
In this Section we  study the existence and non-existence of solutions of the system of perturbed Hammerstein equationa of the type
\begin{multline}\label{perhamm-sys}
u_{i}(t)=\lambda_{i} \int_0^1 k_{i}(t,s)f_{i}(s, u_{1}(s),\ldots, u_{i}^{(m_1)}(s), \ldots,  u_{n}(s), \ldots, u_{n}^{(m_n)}(s))\,ds\\
+ \sum_{j=1}^{p_i}\eta_{ij}{\gamma_{ij}}(t) h_{ij}[u]:=T_{i}u(t) , \ t\in [0,1],\ i=1,2,\ldots,n,\phantom{space}
\end{multline}
where $u=(u_{1},\ldots,  u_{n}).$
Throughout the paper we make the following assumptions on the terms that occur in~\eqref{perhamm-sys}. 
\begin{itemize}
\item[$(C_1)$] For every $i=1,\ldots,n$,  $k_i:[0,1] \times[0,1]\rightarrow [0,+\infty)$ is 
measurable and continuous in $t$ for almost every  (a.e.)~$s$,
that is, for every $\tau\in [0,1] $ we have
\begin{equation*}
\lim_{t \to \tau} |k_i(t,s)-k_i(\tau,s)|=0 \ \text{for a.e.}\ s \in [0,1];
\end{equation*}
furthermore there exist a function $\Phi_{i0} \in L^{1}(0,1)$ such that
$0\leq  k_i(t,s)\leq \Phi_{i0}(s)$ for $t \in [0,1]$ and a.e.~$s\in [0,1]$.

\item[$(C_2)$] For every $i=1,\ldots,n$ and for every $l_{i} \in \mathbb{N}$, with $l_{i}<m_i$,  the partial derivative $\dfrac{\partial^{l_i}k_{i}}{\partial t^{l_i}}$ is measurable and continuous in $t$ for a.e.~$s$,
and there exists $\Phi_{il_{i}}(s) \in L^1(0, 1)$ such that $\Bigl|\dfrac{\partial^{l_i}k_{i}}{\partial t^{l_i}}(t,s)\Bigr|\le \Phi_{il_{i}}(s)$ for $t \in [0,1]$ and a.e. $s \in [0,1]$.

\item[$(C_3)$] For every $i=1,\ldots,n$, $\dfrac{\partial^{m_i}k_{i}}{\partial t^{m_i}}$ is measurable and continuous in $t$ except possibly at the point $t=s$ where there can be a jump discontinuity, that is, right and left limits both exist, 
and there exists $\Phi_{im_i}(s) \in L^1(0, 1)$ such that $\Bigl|\dfrac{\partial^{m_i}k_{i}}{\partial t^{m_i}}(t,s)\Bigr|\le \Phi_{im_i}(s)$ for $t \in [0,1]$ and a.e. $s \in [0,1]$.

\item[$(C_4)$] For every $i=1,\ldots,n$, $f_i:[0,1]\times \prod_{i=1}^{n} \bigl([0,+\infty)\times \mathbb{R}^{m_i}\bigr) \to [0,+\infty)$ is continuous.

\item[$(C_5)$] For every $i=1,\ldots,n$ and $j=1,\dots, p_i$, we have $\gamma_{ij} \in C^{m_i} [0,1] $ and $\gamma_{ij}(t)\geq 0\ \text{for every}\ t\in [0,1]$.

\item[$(C_6)$] For every $i=1,\ldots,n$ and $j=1,\dots, p_i$, we have $\lambda_i, \eta_{ij},  \in [0,+\infty)$.
\end{itemize}
Due to the assumptions above, for every $i=1,\ldots,n$, the linear Hammerstein integral operator
$$
L_{i}w(t):=\int_0^1 k_{i}(t,s) w(s) \,ds
$$
is well defined and compact in the space $C[0,1]$, where we adopt the standard norm $\|w\|_{\infty}:=\max_{t\in [0,1]}|w(t)|$. We recall that a cone $K$ in a real Banach space $X$ is a closed convex set such that $\lambda x\in K$ for every $x \in K$ and for all $\lambda\geq 0$ and satisfying $K\cap (-K)=\{0\}$. It is clear that the operator $L_{i}$ leaves invariant the cone 
$$
\hat{P}:=\{w\in C[0,1]:\ w\ge 0\ \text{for every}\ t\in [0,1] \}.
$$
We denote by $r(L_{i})$ the spectral radius of $L_{i}$ and assume
\begin{itemize}
\item[$(C_7)$] For every $i=1,\ldots,n$, we have $r(L_{i})>0$.
\end{itemize}
Note that, since $\hat{P}$ is a reproducing cone in $C[0,1]$, the assumption $(C_7)$ allows us to apply the well-know Krein-Rutman Theorem and therefore $r(L_{i})$ is an
eigenvalue of $L_{i}$ with a corresponding eigenfunction $\varphi_i\in \hat{P}\setminus \{0\}$, that is
\begin{equation}\label{eigenfunction}
L_{i}\varphi_i(t)=\int_0^1 k_{i}(t,s) \varphi_i (s) \,ds=r(L_{i}) \varphi_i(t).
\end{equation}
In what follows we shall make use of the eigenfunction $\varphi_i$ and the corresponding characteristic value 
\begin{equation*}
\mu_i:=1/r(L_{i}).
\end{equation*}
Note that the non-negative eigenfunction $\varphi_i$ inherits, from the kernel $k_i$, further regularity properties: indeed, since we have
\begin{equation}\label{reg}
\varphi_i (t)=\mu_i \int_0^1 k_{i}(t,s) \varphi_i (s) \,ds,
\end{equation}
and, due to the assumptions $(C_1)$-$(C_3)$, the RHS of~\eqref{reg} is, as a function of the variable $t$, in $C^{m_i}[0,1]$ we obtain 
$$\varphi_i \in (\hat{P}\setminus \{0\}) \cap C^{m_i}[0,1].$$

\begin{rem}
The assumption $(C_7)$ is frequently satisfied in applications. A sufficient condition, for details see~\cite{jw-kql-tmna}, is given by
\begin{itemize}
\item [$(C'_7)$]
There exist a subinterval $[a_i,b_i] \subseteq [0,1]$ and a constant $c_i=c(a_i,b_i) \in (0,1]$ such that
$$
k_i(t,s) \geq c_i\Phi_{i0}(s) \text{ for } t\in [a_i,b_i] \text{ and a.\,e. } \, s \in [0,1].
$$
\end{itemize}
\end{rem}

Due to the hypotheses above, we work in the product space 
$\displaystyle \prod_{i=1}^{n} C^{m_i}[0,1]$ endowed with the norm 
$$\|u\|:=\max_{i=1,\ldots,n}\{\|u_i\|_{C^{m_i}}\},$$
where $\|u_i\|_{C^{m_i}}:=\displaystyle \max_{j=0,\ldots,m_i}\{\|u_i^{(j)}\|_\infty\}$. We utilize the cone 
$$
P:=\Bigl\{u\in \prod_{i=1}^{n} C^{m_i}[0,1]:\ u_i\ge 0\ \text{for every}\ t\in [0,1],\ i=1,\dots, n\Bigl\}.
$$
and we require  the nonlinear functionals $h_{ij}$ to act positively on the cone $P$ and to be compact, that is:
\begin{itemize}
\item[$(C_8)$] For every $i=1,\ldots,n$ and $j=1,\dots, p_i$, $h_{ij}: P \to [0,+\infty)$ is continuous and map bounded sets into bounded sets.
\end{itemize}
We define the operator $T: P \to P$ as
\begin{equation}
T u:=\bigl(T_i u\bigr)_{i=1\ldots n}.
\end{equation}

We make use of the following basic properties of the fixed point index, we refer the reader to~\cite{amann, guolak} for more details.

\begin{pro}\cite{amann, guolak} Let $K$ be a cone in a real Banach space $X$ and let
$D$ be an open bounded set of $X$ with $0 \in D_{K}$ and
$\overline{D}_{K}\ne K$, where $D_{K}=D\cap K$.
Assume that $\tilde{T}:\overline{D}_{K}\to K$ is a compact map such that
$x\neq \tilde{T}x$ for $x\in \partial D_{K}$. Then the fixed point index
 $i_{K}(\tilde{T}, D_{K})$ has the following properties:
\begin{itemize}
\item[$(1)$] If there exists $e\in K\setminus \{0\}$
such that $x\neq \tilde{T}x+\lambda e$ for all $x\in \partial D_K$ and all
$\lambda>0$, then $i_{K}(\tilde{T}, D_{K})=0$.

\item[$(2)$] If $\tilde{T}x \neq \lambda x$ for all $x\in
\partial D_K$ and all $\lambda > 1$, then $i_{K}(\tilde{T}, D_{K})=1$.

\item[(3)] Let $D^{1}$ be open in $X$ such that
$\overline{D^{1}}_{K}\subset D_K$. If $i_{K}(\tilde{T}, D_{K})=1$ and $i_{K}(\tilde{T},
D_{K}^{1})=0$, then $\tilde{T}$ has a fixed point in $D_{K}\setminus
\overline{D_{K}^{1}}$. The same holds if 
$i_{K}(\tilde{T}, D_{K})=0$ and $i_{K}(\tilde{T}, D_{K}^{1})=1$.
\end{itemize}
\end{pro}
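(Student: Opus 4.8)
The plan is to derive the three stated properties from the axiomatic construction of the fixed point index for compact maps on a retract. Since $K$ is a closed convex subset of the Banach space $X$, it is a retract of $X$ (by Dugundji's extension theorem one has a continuous retraction $\rho:X\to K$), and for a compact $\tilde{T}:\overline{D}_K\to K$ without fixed points on the relative boundary $\partial D_K$ one defines $i_K(\tilde{T},D_K)$ as a Leray--Schauder degree of $I-\tilde{T}\circ\rho$ on a suitable bounded open subset of $X$. From this construction I would take as given the four standard facts: normalization ($i_K(\hat{x},D_K)=1$ whenever $\hat{x}\in D_K$ is a constant map value), additivity/excision, homotopy invariance along admissible homotopies of compact maps, and the solution property (if $i_K(\tilde{T},D_K)\ne0$ then $\tilde{T}$ has a fixed point in $D_K$).

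For property $(2)$ I would use the homotopy $H(t,x)=t\,\tilde{T}x$, $t\in[0,1]$, and check it is admissible on $\partial D_K$: for $t=0$ this holds because $0\in D_K$ and $D_K$ is open; for $t\in(0,1)$ a fixed point $x=t\tilde{T}x$ (note $x\neq0$) would give $\tilde{T}x=(1/t)x$ with $1/t>1$, contradicting the hypothesis; and $t=1$ is excluded since $\tilde{T}$ is assumed fixed-point-free on $\partial D_K$. Homotopy invariance and normalization then give $i_K(\tilde{T},D_K)=i_K(0,D_K)=1$.

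For property $(1)$ I would first note that, $\overline{D}_K$ being bounded and $\tilde{T}$ compact, $\tilde{T}(\overline{D}_K)$ is bounded, so there is $\lambda_0>0$ with $\tilde{T}x+\lambda_0 e\notin\overline{D}_K$ for all $x\in\overline{D}_K$; hence $x\mapsto\tilde{T}x+\lambda_0 e$ has no fixed point in $\overline{D}_K$ and $i_K(\tilde{T}(\cdot)+\lambda_0 e,D_K)=0$. The homotopy $H(t,x)=\tilde{T}x+t\lambda_0 e$, $t\in[0,1]$, is admissible on $\partial D_K$: a fixed point with $t>0$ contradicts the hypothesis (take $\lambda=t\lambda_0>0$), and with $t=0$ it is excluded since $\tilde{T}$ is fixed-point-free on $\partial D_K$. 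Homotopy invariance then yields $i_K(\tilde{T},D_K)=0$.

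Property $(3)$ is additivity together with the solution property. Since $\overline{D^1}_K\subset D_K$ and both indices are defined, $\tilde{T}$ has no fixed point on $\partial D_K\cup\partial D^1_K$; because $D_K\setminus\bigl(D^1_K\cup(D_K\setminus\overline{D^1}_K)\bigr)=\partial D^1_K$, additivity applies and gives $i_K(\tilde{T},D_K)=i_K(\tilde{T},D^1_K)+i_K(\tilde{T},D_K\setminus\overline{D^1}_K)$. Substituting the assumptions forces $i_K(\tilde{T},D_K\setminus\overline{D^1}_K)\ne0$ in either case, so the solution property produces a fixed point in $D_K\setminus\overline{D^1}_K$. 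The only genuinely delicate point throughout is the bookkeeping: one must interpret openness, closure and boundary consistently \emph{relative to} $K$, and verify that each homotopy is fixed-point-free on the relative boundary $\partial D_K$ — the homotopies above are arranged precisely so that the stated hypotheses cover the endpoints $t=0$ and $t=1$.
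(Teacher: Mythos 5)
Your proposal is correct: the paper does not prove this proposition but simply cites it to Amann and Guo--Lakshmikantham, and your derivation of $(1)$--$(3)$ from the normalization, homotopy-invariance, additivity and solution properties of the index is exactly the standard argument given in those references. The delicate points are all handled properly — the homotopies $t\tilde{T}x$ and $\tilde{T}x+t\lambda_0 e$ remain in $K$ because $K$ is a convex cone, $t=0$ is admissible in $(2)$ because $0\in D_K$ is interior relative to $K$, and the large-translate trick in $(1)$ works because $\overline{D}_K$ and $\tilde{T}(\overline{D}_K)$ are bounded.
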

For $\rho\in (0,\infty)$, we define the sets
$$
P_{\rho}:=\{u\in P: \|u\|<\rho\},\quad I_{\rho}:=[0,1]\times \prod_{i=1}^{n} \bigl([0,\rho]\times [-\rho, \rho]^{m_i}\bigr)
$$
and the quantities
$$
\overline{f}_{i\rho}:=\max_{I_{\rho}}f_i(t, x_{10},\ldots, x_{1m_1},\ldots, x_{n0},\ldots, x_{nm_n}),\quad H_{ij\rho}:=\sup_{u\in \partial P_{\rho}} h_{ij}[u],
$$
$$
K_{il}:=\begin{cases}\displaystyle \sup_{t\in[0,1]}\int_0^1 k_i(t,s)\,ds,\ l=0,\\
\displaystyle \sup_{t\in[0,1]}\int_0^1 \Bigl|\frac{\partial^{l}k_{i}}{\partial t^{l}}(t,s)\Bigr|\,ds,\ l=1,\ldots, m_i. 
\end{cases}
$$

With these ingredients we can state the following existence and localization result.
\begin{thm}\label{thmsol}
Assume there exist $r,R,\delta\in (0,+\infty)$, with $r<R$, and $i_0\in \{1,2,\ldots,n\}$ such that the following three inequalities are satisfied:
\begin{equation}\label{idx1}
\max_{\substack{i=1,\ldots, n\\ l=0,\ldots, m_i}}\Bigl\{\lambda_{i}\overline{f}_{iR} K_{il}+\sum_{j=1}^{p_i}\eta_{ij}\|{\gamma_{ij}^{(l)}}\|_{\infty}H_{ijR} \Bigr \}\leq R,
\end{equation}
\begin{equation}\label{idx0}
\lambda_{i_0}\geq \frac{\mu_{i_{0}}}{\delta},\quad f_{i_{0}}(t, x_{10},\ldots, x_{1m_1},\ldots, x_{n0},\ldots, x_{nm_n})\ge \delta x_{i_{0} 0},\ \text{on}\  I_{r}.
\end{equation}

Then the system~\eqref{perhamm-sys} has a solution $u\in P$ such that $$r\leq \|u\| \leq R.$$
\end{thm}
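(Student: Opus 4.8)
The plan is to compute the fixed point index of $T$ on the two relatively open sets $P_r$ and $P_R$ and then invoke part $(3)$ of the Proposition. First I would record that $T$ maps $P$ into $P$ and is compact on bounded subsets of $\prod_{i=1}^{n}C^{m_i}[0,1]$: each $w\mapsto L_i w$ is compact into $C[0,1]$ by $(C_1)$, and using the kernel derivative bounds $(C_2)$--$(C_3)$ together with the continuity in $t$ of $\frac{\partial^{l}k_i}{\partial t^{l}}(\cdot,s)$ --- the integral absorbing the possible jump of the top-order derivative at $t=s$, exactly as in the regularity argument given for $\varphi_i$ --- each $L_i$ lifts to a compact operator into $C^{m_i}[0,1]$; the perturbation terms $\sum_{j}\eta_{ij}\gamma_{ij}(t)h_{ij}[u]$ are compact since each $h_{ij}$ is continuous and maps bounded sets into bounded sets by $(C_8)$ while $\gamma_{ij}\in C^{m_i}[0,1]$ is fixed by $(C_5)$. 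Positivity of $Tu$ is immediate from $k_i\ge 0$, $f_i\ge 0$, $\gamma_{ij}\ge 0$, $\lambda_i,\eta_{ij}\ge 0$, $h_{ij}\ge 0$. If $T$ has a fixed point on $\partial P_r$ or on $\partial P_R$ we are done; otherwise the two indices below are well defined.

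To get $i_{P}(T,P_R)=1$ I would use part $(2)$ of the Proposition and show $\|Tu\|\le R$ for every $u\in\partial P_R$. For such $u$ we have $\|u_i^{(j)}\|_{\infty}\le R$ for all $i,j$, hence the argument of $f_i$ lies in $I_R$ so $f_i(\cdot)\le\overline f_{iR}$, and $h_{ij}[u]\le H_{ijR}$. Differentiating under the integral sign $l$ times, $0\le l\le m_i$, and estimating with $(C_1)$--$(C_3)$ yields
$$\|(T_iu)^{(l)}\|_{\infty}\le \lambda_i\overline f_{iR}K_{il}+\sum_{j=1}^{p_i}\eta_{ij}\|\gamma_{ij}^{(l)}\|_{\infty}H_{ijR},$$
which is $\le R$ by \eqref{idx1}; taking the maximum over $l$ and then over $i$ gives $\|Tu\|\le R$. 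In particular $Tu\ne\mu u$ for $u\in\partial P_R$ and $\mu>1$, so the index equals $1$.

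To get $i_{P}(T,P_r)=0$ I would use part $(1)$ with $e=(e_1,\dots,e_n)\in P\setminus\{0\}$ defined by $e_{i_0}=\varphi_{i_0}$ and $e_i=0$ for $i\ne i_0$; recall $\varphi_{i_0}\in(\hat P\setminus\{0\})\cap C^{m_{i_0}}[0,1]$. Suppose, for a contradiction, that $u=Tu+\mu e$ for some $u\in\partial P_r$ and $\mu>0$. Looking at the $i_0$-th component, discarding the nonnegative perturbation terms and using $f_{i_0}(s,\cdot)\ge\delta\,u_{i_0}(s)$ on $I_r$ (valid since $\|u\|=r$) together with $\lambda_{i_0}\ge\mu_{i_0}/\delta$ from \eqref{idx0} and $\int_0^1 k_{i_0}(t,s)u_{i_0}(s)\,ds\ge 0$, one obtains
$$u_{i_0}(t)\ge\mu_{i_0}\int_0^1 k_{i_0}(t,s)u_{i_0}(s)\,ds+\mu\,\varphi_{i_0}(t)=\mu_{i_0}L_{i_0}u_{i_0}(t)+\mu\,\varphi_{i_0}(t).$$
Put $\bar c=\sup\{c>0:\ u_{i_0}\ge c\,\varphi_{i_0}\}$, which is positive (as $u_{i_0}\ge\mu\varphi_{i_0}$), finite (as $\varphi_{i_0}\in\hat P\setminus\{0\}$ and $u_{i_0}$ is bounded), and attained. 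Applying the order-preserving map $\mu_{i_0}L_{i_0}$ to $u_{i_0}\ge\bar c\,\varphi_{i_0}$ and using $\mu_{i_0}L_{i_0}\varphi_{i_0}=\varphi_{i_0}$ from \eqref{reg} gives $\mu_{i_0}L_{i_0}u_{i_0}\ge\bar c\,\varphi_{i_0}$, whence $u_{i_0}\ge(\bar c+\mu)\varphi_{i_0}$, contradicting the maximality of $\bar c$. Hence $u\ne Tu+\mu e$ on $\partial P_r$ for every $\mu>0$, and the index is $0$.

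Since $r<R$ gives $\overline{P_r}\subset P_R$, part $(3)$ of the Proposition produces a fixed point $u\in P_R\setminus\overline{P_r}$ of $T$, that is, a solution of \eqref{perhamm-sys} with $r\le\|u\|\le R$. I expect the genuinely delicate step to be the first one: verifying that $T$ is well defined and compact as an operator into $\prod_{i=1}^{n}C^{m_i}[0,1]$, in particular the justification of differentiation under the integral up to order $m_i$ and the continuity of the resulting top-order term despite the jump of $\frac{\partial^{m_i}k_i}{\partial t^{m_i}}$ at $t=s$. Once that framework is in place, the two index computations are routine.
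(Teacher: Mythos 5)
Your proposal is correct and follows essentially the same route as the paper: index $1$ on $\partial P_R$ via the norm estimate coming from \eqref{idx1}, index $0$ on $\partial P_r$ via comparison with the Krein--Rutman eigenfunction $\varphi_{i_0}$ using \eqref{idx0}, and then the subtraction property of the index. The only (minor) difference is the final contradiction in the index-$0$ step: you maximise the comparison constant $\bar c$ in $u_{i_0}\ge \bar c\,\varphi_{i_0}$, whereas the paper iterates the same inequality to obtain $u_{i_0}\ge n\sigma\varphi_{i_0}$ for all $n\in\N$; the two devices are interchangeable here.
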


\begin{proof}
With a careful use of the Ascoli-Arzel\`{a} theorem, it is can be proved that, under the assumptions $(C_{1})$-$(C_{8})$, the operator $T$ maps $P$ into $P$ and is compact.

If $T$ has a fixed point either on $\partial {P_r}$ or $\partial {P_R}$ we are done. 
Assume now that $T$ is fixed point free on $\partial {P_r}\cup\partial {P_R}$, we are going to prove that $T$ has a fixed point in 
$ P_R\setminus \overline {P_r}$.

We firstly prove that 
 $
\sigma  u\neq Tu\ \text{for every}\ u\in \partial P_{R}\
\text{and every}\  \sigma >1.
$
If this does not hold, then there exist $u\in \partial P_{R}$ and $\sigma >1$ such that $\sigma  u= Tu$. 
Note that if $\| u\| = R$ then there exist $i_0, l_0$ such that 
$\| u_{i_0}^{(l_0)}\|_{\infty} = R$. We show the case $l_0\neq 0$ (the case $l_0=0$ is simpler, hence omitted)
Thus we have, for $t\in [0,1]$,
\begin{equation}\label{ineq1}
\begin{aligned}
\sigma u_{i_0}^{(l_0)}(t)=&\lambda_{i} \int_0^1 \frac{\partial^{l_0}k_{i_0}}{\partial t^{l_0}}(t,s)f_{i_0}(s, u_{1}(s),\ldots,u_{i}^{(m_1)}(s), \ldots,  u_{n}(s), \ldots, u_{n}^{(m_n)}(s))\,ds\\
&+ \sum_{j=1}^{p_i}\eta_{i_0j}{\gamma_{i_0j}^{(l_0)}}(t) h_{i_0j}[u].
\end{aligned}
\end{equation}
From~\eqref{ineq1} we obtain, for $t\in [0,1]$,
\begin{multline}\label{ineq2}
\sigma |u_{i_0}^{(l_0)}(t)|\leq \lambda_{i} \int_0^1 \Bigl| \frac{\partial^{l_0}k_{i_0}}{\partial t^{l_0}}(t,s)\Bigr| f_{i_0}(s, u_{1}(s),\ldots,u_{i}^{(m_1)}(s), \ldots,  u_{n}(s), \ldots, u_{n}^{(m_n)}(s))\,ds\\
+ \sum_{j=1}^{p_i}\eta_{i_0j}|{\gamma_{i_0j}^{(l_0)}}(t)| h_{i_0j}[u]\leq \lambda_{i_0}\overline{f}_{i_0R} K_{i_0l_0}+\sum_{j=1}^{p_i}\eta_{i_0j}\|{\gamma_{i_0j}^{(l_0)}}\|_{\infty}H_{i_0jR}\leq R.
\end{multline}
Taking in~\eqref{ineq2} the supremum for $t\in [0,1]$ yields  $\sigma \leq 1$, a contradiction.\newline
Therefore we have $i_{P}(T, P_R)=1.$

We now consider the function $\varphi(t):= (\varphi_1(t),\ldots,\varphi_n(t))$, where $t\in [0,1]$ and $\varphi_i$ is given by~\eqref{eigenfunction}. Note that $\varphi\in P\setminus \{0\}$. We show that 
\begin{equation*} 
u\neq Tu+\sigma \varphi\ \text{for every}\ u\in \partial P_{r}\ \text{and every}\ \sigma  >0.
\end{equation*}
If not, there exists $u\in \partial P_{r}$ and $\sigma  >0$ such that
$
u= Tu+\sigma  \varphi . 
$
In particular, we have $
u_{i_0}(t)= T_{i_0}u(t)+\sigma  \varphi_{i_0} (t) 
$ for every $t\in [0,1]$ and therefore $u_{i_0}(t)\geq \sigma  \varphi_{i_0}(t)$ in $ [0,1]$.
Observe that we have $r\geq \|u_{i_0}\|_{\infty}\geq \sigma \|  \varphi_{i_0} \|_{\infty}>0$.\newline
 For every $t\in [0,1]$ we have
\begin{multline*}
u_{i_0}(t)=\lambda_{i} \int_0^1 k_{i_0} (t,s) f_{i_0}(s, u_{1}(s),\ldots,u_{i}^{(m_1)}(s), \ldots,  u_{n}(s), \ldots, u_{n}^{(m_n)}(s))\,ds\\
+ \sum_{j=1}^{p_i}\eta_{i_0j}\gamma_{i_0j}(t) h_{i_0j}[u] + \sigma \varphi_{i_0}(t)  
\ge  \lambda_{i} \int_0^1 k_{i_0} (t,s) \delta u_{i_0}(s)\,ds + \sigma \varphi_{i_0}(t) \\
\ge  \lambda_{i}  \int_0^1 k_{i_0} (t,s) \delta \sigma  \varphi_{i_0}(s)\,ds + \sigma \varphi_{i_0}(t)\\ 
=  \frac{ \sigma  \lambda_{i_0} \delta}{\mu_{i_{0}}} \varphi_{i_0}  (t) +\sigma  \varphi_{i_0}(t)  \geq 2\sigma  \varphi_{i_0}(t).
\end{multline*}
By iteration we obtain, for $t\in [0,1]$,
$$
u_{i_0}(t)\ge n\sigma  \varphi_{i_0}(t) \ \text{for every}\ n\in\mathbb{N},
$$
which contradicts the fact that $\|u_{i_0}\|_{\infty}\leq r$.\newline
Thus we obtain $i_{P}(T, P_{r})=0.$

Therefore we have
$$i_{P}(T, P_R \setminus \overline{P_r})=i_{P}(T, P_R )-i_{P}(T, P_r )=1,$$
which proves the result.
\end{proof}
We now illustrate the applicability of Theorem~\ref{thmsol}.
\begin{ex}
We focus on the system
\begin{equation} \label{bvpex1}
\left\{
\begin{array}{c}
u_{1}^{\prime \prime }(t)+\lambda_1 f_{1}(t,u_{1}(t), u_{1}'(t),u_{2}(t), u_{2}'(t), u_{2}''(t), u_{2}'''(t))=0,\quad t\in (0,1), \\
u_{2}^{(4)}(t)=\lambda_2 f_{2}(t,u_{1}(t), u_{1}'(t),u_{2}(t), u_{2}'(t), u_{2}''(t), u_{2}'''(t)),\quad  t\in (0,1), \\
u_{1}(0)=0,\ u_{1}(1)=\eta_{11}h_{11}[(u_{1},u_{2})], \\
u_{2}(0)=\eta_{21}h_{21}[(u_{1},u_{2})],\ u_{2}^{\prime \prime }(0)= u_{2}(1)= u_{2}^{\prime \prime}(1)=0, %
\end{array}%
\right. 
\end{equation}
where $h_{11}, h_{21}$ are nonnegative, compact functionals acting on the cone
$$
P=\bigl\{(u_{1},u_{2})\in C^{1}[0,1]\times C^{3}[0,1]:\ u_{1}, u_{2}\ge 0\ \text{for every}\ t\in [0,1]\bigl\}.
$$
With our methodology we could study a more complicated version of this BVP, by adding more functional terms in the BCs, but we refrain from doing so for the sake of clarity.

It is routine to show that the solutions of~\eqref{bvpex1} can be written in the form 
\begin{gather}
\left\{
\begin{aligned}\label{perturbed-ex}
u_{1}(t)=&\eta_{11} t h_{11}[(u_{1},u_{2})]\\&+\lambda_1\int_{0}^{1}k_1(t,s)f_{1}(s,u_{1}(s), u_{1}'(s),u_{2}(s), u_{2}'(s), u_{2}''(s), u_{2}'''(s))\,ds, \\
u_{2}(t)= &\eta_{21}(1-t) h_{21}[(u_{1},u_{2})]\\&+\lambda_2\int_{0}^{1}k_2(t,s)f_{2}(s,u_{1}(s), u_{1}'(s),u_{2}(s), u_{2}'(s), u_{2}''(s), u_{2}'''(s))\,ds,%
\end{aligned}
\right. 
\end{gather}
where
\begin{equation}\label{kernels}
k_1(t,s)=%
\begin{cases}
s(1-t),\, & s \leq t, \\
t(1-s),\, & s>t,%
\end{cases}
\quad \text{and}\quad k_2(t,s)=
\begin{cases}
\frac{1}{6} s(1-t)(2t-s^{2}-t^{2}),\,  & s\leq t, \\
\frac{1}{6} t(1-s)(2s-t^{2}-s^{2}),\,  & s>t.%
\end{cases}%
\end{equation}
It is known that the kernels $k_1$ and $k_2$ that occur in~\eqref{kernels} are continuous, non-negative, satisfy condition $(C_7)$ and (see for example~\cite{kor-88, jw-gi-df, jw-kql-tmna})
\begin{equation*}
K_{10}=\frac{1}{8},\ \mu_1=\pi^{2},\ K_{20}=\frac{5}{384},\ \mu_2=\pi^{4}.
\end{equation*}
By direct calculation we obtain 
\begin{equation*}
\frac{\partial k_1}{\partial t}(t,s)=%
\begin{cases}
-s,\, & s < t, \\
(1-s),\, & s>t,%
\end{cases}
\quad
\frac{\partial k_2}{\partial t}(t,s)=\frac{1}{6}
\begin{cases}
s(-6t+s^2+3t^2+2),\,  & s\leq t, \\
(1-s)(-s^2+2s-3t^2),\,  & s>t,%
\end{cases}%
\end{equation*}
\begin{equation*}
\frac{\partial^2 k_2}{\partial t^2}(t,s)=
\begin{cases}
s(t-1),\,  & s\leq t, \\
t(s-1),\,  & s>t.%
\end{cases}%
\quad \text{and}\quad 
\frac{\partial^3 k_2}{\partial t^3}(t,s)=
\begin{cases}
s,\,  & s< t, \\
(s-1),\,  & s>t.%
\end{cases}%
\end{equation*}
We may use 
\begin{equation*}
\Phi _{20}(s) =
\begin{cases}
\frac{\sqrt{3}}{27}s (1-s^{2})^{\frac{3}{2}},&
\;\; \text{for} \;\; 0\leq s\leq \frac{1}{2},\\
\frac{\sqrt{3}}{27}(1-s) s^{\frac{3}{2}}(2-s)^{\frac{3}{2}},& \;\;
\text{for} \;\; \frac{1}{2} < s\leq 1,
\end{cases}\quad \text{(see \cite{jw-gi-df}),}
\end{equation*}
\begin{equation*}
\Phi _{21}(s)=\frac{1}{6}s(2+s^2), \quad \text{(see~\cite{rs-fm-narwa-19}),}
\end{equation*}
and, by direct calculation, we take
\begin{equation*}
\Phi _{10}(s)=\Phi _{22}(s)=s(1-s),\quad \Phi _{11}(s)=\Phi _{23}(s)=\bigl|s-\frac{1}{2}\bigr|+\frac{1}{2}.
\end{equation*}
Therefore the assumptions $(C_1)$-$(C_3)$ are satisfied.
By direct computation we obtain
\begin{equation*}
K_{22}=\frac{1}{8},\ K_{11}=K_{23}=\frac{1}{2}, \ K_{21}\leq \frac{5}{24}.
\end{equation*}
Note that we have 
\begin{equation*}
{\gamma_{11}}(t)=t,\ {\gamma_{11}}'(t)=1,\ {\gamma_{21}}(t)=(1-t),\ {\gamma_{21}}'(t)=-1,
\end{equation*}
and therefore we get
\begin{equation*}
\|\gamma_{11}\|_{\infty}=\|\gamma_{11}'\|_{\infty}=\|{\gamma_{21}}\|_{\infty}=\|{\gamma_{21}}'\|_{\infty}=1,\ \|{\gamma_{21}}''\|_{\infty}=\|{\gamma_{21}}'''\|_{\infty}=0,
\end{equation*}
Thus the condition~\eqref{idx1} is satisfied if
\begin{equation}\label{idx1suf}
\max\Bigl\{\frac{1}{2}\lambda_{1}\overline{f}_{1R}+\eta_{11}H_{11R},\ \frac{5}{24}\lambda_{2}\overline{f}_{2R}+\eta_{21}H_{21R}, \frac{1}{2}\lambda_{2}\overline{f}_{2R} \Bigr \}\leq R.
\end{equation}
Let us now fix the nonlinearities $f_i$ and the functionals $h_{i1}$, say
\begin{multline*}
f_{1}(t,u_{1}(t), u_{1}'(t),u_{2}(t), u_{2}'(t), u_{2}''(t), u_{2}'''(t))=u_1^2(t)(2- t\sin(u_1'(t)+u_2''(t)),\\
f_{2}(t,u_{1}(t), u_{1}'(t),u_{2}(t), u_{2}'(t), u_{2}''(t), u_{2}'''(t))=\sqrt{u_2(t)} e^{t(u_1(t)+u_2'''(t))},\\
h_{11}[(u_{1},u_{2})]=\int_{0}^1 (u_1'(t)+u_2'''(t))^2\,dt,\\
h_{21}[(u_{1},u_{2})]=(u_1'(1/4))^2+(u_2''(3/4))^4,%
\end{multline*}
and prove the existence of solutions in $u\in P$ with $\|u\|\leq 1$. Thus we fix $R=1$.
Since $\bar{f}_{11}\leq 3,\ \bar{f}_{21}\leq e^2,\  H_{111}\leq 4,\ H_{211}\ \leq 2$, the condition~\eqref{idx1suf} is satisfied if the inequality
\begin{equation}\label{idx1suf2}
\max\Bigl\{\frac{3}{2}\lambda_{1}+4\eta_{11},\ \frac{5e^2}{24}\lambda_{2}+2\eta_{21}, \frac{e^2}{2}\lambda_{2} \Bigr \}\leq 1
\end{equation}
holds.
Note that $f_2$ satisfies condition~\eqref{idx0} for every fixed $\lambda_2 >0$, by choosing $r$ sufficiently small.
Therefore, for the range of parameters that satisfy the inequality~\eqref{idx1suf2} with $\lambda_2>0$, Theorem~\ref{thmsol} provides the existence 
of a solution of the system~\eqref{perturbed-ex} in $P$, with $0<\|u\| \leq 1$; this occurs, for example, for $\lambda_1=1/10, \lambda_2=1/5, \eta_{11}=1/5, \eta_{22}=1/3$.
\end{ex}
We now use an elementary argument to prove a non-existence result.
\begin{thm}\label{nonexthm}
Assume that there exist $\tau_i, \xi_{ij}\in (0,+\infty)$ such that
\begin{equation*}
0\leq f_{i} (t,x_{10},\ldots, x_{1m_1},\ldots, x_{n0},\ldots, x_{nm_n})\leq \tau_i x_{i0},\ \text{on}\ [0,1]\times \prod_{i=1}^{n} \bigl([0,+\infty)\times \mathbb{R}^{m_i}\bigr),
\end{equation*}
\begin{equation*}
h_{ij}[u]\leq \xi_{ij}\|u_i\|_{\infty}, \ \text{for every}\ u \in P,\ i=1\ldots n,\ j=1\ldots p_i,
\end{equation*}
\begin{equation}\label{nonexineq}
\max_{i=1,\ldots, n}\Bigl\{\lambda_i \tau_i K_{i0}+\sum_{j=1}^{p_i}\eta_{ij} \xi_{ij}\|{\gamma_{ij}}\|_{\infty}\Bigr\}<1.
 \end{equation}
Then the system~\eqref{perhamm-sys} has at most the zero solution in $P$. 
\end{thm}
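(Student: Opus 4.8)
The plan is to give the direct a priori estimate announced in the statement: assume $u=(u_1,\dots,u_n)\in P$ is any solution of~\eqref{perhamm-sys}, and show that necessarily $u_i\equiv 0$ for each $i$. Fix $i\in\{1,\dots,n\}$ and write out the $i$-th component equation $u_i=T_iu$. Since $u\in P$ we have $u_i(s)\ge 0$ for all $s$, and the one-sided growth hypothesis $f_i(t,x_{10},\dots,x_{nm_n})\le\tau_i x_{i0}$ — which is imposed on the whole domain $[0,1]\times\prod_{i=1}^{n}\bigl([0,+\infty)\times\mathbb{R}^{m_i}\bigr)$ — may be evaluated along $u$, with the slot $x_{i0}$ filled by $u_i(s)\ge 0$ and the remaining (possibly signed) slots filled by the appropriate derivatives of the components of $u$; this is legitimate precisely because the bound holds on the full domain. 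Combining this with $k_i\ge 0$ (from $(C_1)$), $\gamma_{ij}\ge 0$ (from $(C_5)$), $\lambda_i,\eta_{ij}\ge 0$ (from $(C_6)$) and the functional bound $h_{ij}[u]\le\xi_{ij}\|u_i\|_\infty$, one obtains, for every $t\in[0,1]$,
$$0\le u_i(t)\le\lambda_i\tau_i\int_0^1 k_i(t,s)u_i(s)\,ds+\sum_{j=1}^{p_i}\eta_{ij}\xi_{ij}\gamma_{ij}(t)\|u_i\|_\infty.$$

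Next I would bound $\int_0^1 k_i(t,s)u_i(s)\,ds\le\|u_i\|_\infty\int_0^1 k_i(t,s)\,ds\le K_{i0}\|u_i\|_\infty$ and $\gamma_{ij}(t)\le\|\gamma_{ij}\|_\infty$, and then take the supremum over $t\in[0,1]$, which gives
$$\|u_i\|_\infty\le\Bigl(\lambda_i\tau_i K_{i0}+\sum_{j=1}^{p_i}\eta_{ij}\xi_{ij}\|\gamma_{ij}\|_\infty\Bigr)\|u_i\|_\infty.$$
By~\eqref{nonexineq} the constant in parentheses is strictly smaller than $1$, so the displayed inequality forces $\|u_i\|_\infty=0$. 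As $i\in\{1,\dots,n\}$ was arbitrary, $u=0$, which is the assertion.

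There is essentially no obstacle here — the result is elementary, as advertised — and the only points deserving a line of care are the two flagged above: that the growth condition on $f_i$ is global in the spatial variables and hence applicable to the solution despite the derivative slots being unconstrained in $P$, and that every summand in the representation of $u_i$ is nonnegative, so no absolute values are lost when passing to the estimate. Note in particular that the hypotheses $(C_2)$, $(C_3)$ and $(C_7)$ play no role in this argument, and that no use of the fixed point index is needed.
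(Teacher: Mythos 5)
Your proof is correct and follows essentially the same elementary a priori estimate as the paper: bound $f_{i}$ by $\tau_i u_i$, bound $h_{ij}$ by $\xi_{ij}\|u_i\|_\infty$, use the nonnegativity of $k_i$, $\gamma_{ij}$, $\lambda_i$, $\eta_{ij}$, and take the supremum in $t$ to contradict~\eqref{nonexineq}. The only (cosmetic) difference is that you argue directly for every component $i$, whereas the paper argues by contradiction after selecting an index $i_0$ with $\|u_{i_0}\|_\infty>0$.
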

\begin{proof}
Assume that there exist $u\in P\setminus \{0\}$ such that $Tu=u$. Then there exists $i_{0}\in \{1,\ldots, n\}$ such that $\|u_{i_{0}}\|_{\infty}=\rho$, for some $\rho>0$.
Then, 
for every $t\in [0,1]$, we have
\begin{equation}~\label{ineq5}
\begin{aligned}
u_{i_0}(t)=&\lambda_{i_0} \int_0^1 k_{i_0} (t,s) f_{i_0}(s, u_{1}(s),\ldots,u_{i}^{(m_1)}(s), \ldots,  u_{n}(s), \ldots, u_{n}^{(m_n)}(s))\,ds\\
&+ \sum_{j=1}^{p_i}\eta_{i_0j}\gamma_{i_0j}(t) h_{i_0j}[u] \\ 
\leq & \lambda_{i_0} \int_0^1 k_{i_0} (t,s) \tau_{i_0} u_{i0}\,ds+ \sum_{j=1}^{p_i}\eta_{i_0j}\gamma_{i_0j}(t) h_{i_0j}[u]\\
\leq & \lambda_{i_0} \int_0^1 k_{i_0} (t,s) \tau_{i_0} \rho\,ds+ \sum_{j=1}^{p_i}\eta_{i_0j}\gamma_{i_0j}(t) \xi_{i_0j}\rho\\
\leq & \lambda_{i_0} \tau_{i_0} K_{i_{0}0}\rho+ \sum_{j=1}^{p_i}\eta_{i_0j}\|\gamma_{i_0j}\|_{\infty} \xi_{i_0j}\rho.
\end{aligned}
\end{equation}
Taking the supremum for $t\in [0,1]$ in~\eqref{ineq5} gives $\rho < \rho$, a contradiction.
\end{proof}
We conclude by illustrating the applicability of Theorem~\ref{nonexthm}.
\begin{ex}
Let us now consider the system
\begin{equation} \label{bvpex2}
\left\{
\begin{array}{c}
u_{1}^{\prime \prime }(t)+\lambda_1  u_1(t)(2-t\sin (u_2(t)u_1'(t)))=0,\quad t\in (0,1), \\
u_{2}^{(4)}(t)=\lambda_2  u_2(t)(2-t\cos(u_1(t)u_2'''(t))),\quad  t\in (0,1), \\
u_{1}(0)=0,\ u_{1}(1)=\eta_{11} u_1(1/4)\cos^2 (u_1'(3/4)u_2''(1/4)), \\
u_{2}(0)=\eta_{21}u_{2}(3/4)\sin^2 (u_1'(1/4)u_{2}'''(3/4)),\ u_{2}^{\prime \prime }(0)= u_{2}(1)= u_{2}^{\prime \prime}(1)=0. %
\end{array}%
\right. 
\end{equation}
In this case we may take $\tau_1=\tau_2=3, \xi_{11}=\xi_{21}=1$.  Then the condition~\eqref{nonexineq} reads
\begin{equation}\label{bvpex2rg}
\max\Bigl\{\frac{3}{8}\lambda_{1}+\eta_{11},\ \frac{5}{128}\lambda_{2}+\eta_{21} \Bigr \}< 1.
\end{equation}

Since $(0,0)$ is a solution of the system~\eqref{bvpex2}, for the range of parameters that satisfy the inequality~\eqref{bvpex2rg}, Theorem~\ref{nonexthm} guarantees that the only possible solution in $P$ of the BVP~\eqref{bvpex2} is the trivial one; this occurs, for example, for $\lambda_1=1, \lambda_2=5, \eta_{11}=1/2, \eta_{22}=1/3$.
\end{ex}

\end{document}